\newtheorem{theorem}[equation]{Theorem}
\newtheorem{lemma}[equation]{Lemma}
\newtheorem{corollary}[equation]{Corollary}
\theoremstyle{definition}
\theoremstyle{remark}
\newtheorem{remark}[equation]{Remark}
\numberwithin{equation}{section}
\newcommand{ \R }{ \mathbb{R} }
\newcommand{ \tz }{ \tilde{z} }
\renewcommand{\epsilon}{\varepsilon}
\renewcommand{\phi}{\varphi}
\renewcommand{\le}{\leqslant}
\renewcommand{\ge}{\geqslant}
\renewcommand{\leq}{\leqslant}
\begin{document}

\title{Parabolic weighted Sobolev-Poincar\'e type inequalities}

\author{Lars Diening}

\address{Lars Diening, University Bielefeld, Universit\"atsstrasse 25, 33615 Bielefeld, Germany}
\email{lars.diening@uni-bielefeld.de}

\author{Mikyoung Lee}
 \address{Mikyoung Lee, Department of Mathematics, Pusan National University, Busan 46241, Republic of Korea}
\email{\texttt{mikyounglee@pusan.ac.kr}}

\author{Jihoon Ok}
\address{Jihoon Ok, Department of Mathematics, Sogang University, Seoul 04107, Republic of Korea}
\email{\texttt{jihoonok@sogang.ac.kr}}

\thanks{}

\subjclass[2020]{46E35; 35K10,35A23} 



\keywords{Sobolev-Poincar\'e inequality, parabolic equation, weight, parabolic Muckenhupt class}

\begin{abstract} We derive weighted Sobolev-Poincar\'e type inequalities in function spaces concerned with parabolic partial differential equations. We consider general weights depending on both space and time variables belonging to a Muckenhoupt class, so-called the parabolic $A_p$-class, where only the parabolic cubes are involved in the definition.
\end{abstract}

\maketitle
\section{\bf Introduction}
Sobolev and Poincar\'e type inequalities are fundamental tools investigating relevant Sobolev spaces and related partial differential equations (PDEs). For classical weighted Sobolev spaces they  have been studied for a long time in for instance
\cite{Bo88,CW85,CF85,Ch93,Ch01,DD08,FKS82,HK00,SW85}.  In particular,  Fabes, Kenig and Serapioni  
\cite{FKS82}  obtained the  weighted Sobolev-Poincar\'e inequality 
\begin{equation}\label{classical}
\left(\frac{1}{w(B_r)}\int_{B_r} |f-(f)_{B_r}|^{pk}w\,dx \right)^{\frac{1}{pk}}  \le  c\,r \left(\frac{1}{w(B_r)}\int_{B_r}|Df|^p
w\,dx\right)^{\frac{1}{p}}, 
\end{equation}
and, using this,  proved $C^\alpha$-regularity and Harnack's inequality for a solution to a degenerate linear elliptic 
equation.
In the above inequality, $B_r$ is a ball in $\R^n$, $1<p<\infty$, a weight $w:\R^n \to [0,\infty)$ is in the $A_p$-class, 
and $k>1$ can be chosen as 
\[
k=\frac{n}{n-1}+\delta
\]
for some $\delta>0$.
We also refer to \cite{CF85} for a simpler proof of \eqref{classical}.  After then, regularity theory for degenerate  parabolic 
equations
have been actively studied, see for instance
\cite{CS84-1,CS84,CS87,GW91,Su00}.
However, the weights  treated in the preceding papers are independent of the time variable or satisfy the $A_p$ condition on the time or the space variables separately. In this paper, we consider Sobolev spaces arising from parabolic PDEs and weights in the so-called parabolic $A_p$-class (cf. Section~\ref{Sec2}), which is a  natural Muckenhoupt class in the parabolic setting, and derive 
the Sobolev-Poincar\'e type inequalities in this parabolic setting.

More precisely, we consider a function $u\in L^1(I,W^1_1(\Omega))$ that is a  distributional solution of the following divergence type linear parabolic equations:
\begin{equation}\label{diveq}
u_t = \mathrm{div}_x\,G \quad \text{in }\ 
\Omega\times I,
\end{equation}
 where $z=(x,t)\in \Omega\times I \subset \R^n\times \R$, when $G\in L^1(\Omega\times I,\R^n)$. Here the distributional solution $u$ of \eqref{diveq} means it satisfies that
\[
\int_{\Omega\times I}u \varphi_t\, dz = \int_{\Omega\times I} G \cdot D_x\varphi \,dz  \quad \text{for all }\ \varphi\in C^\infty_0(\Omega\times I).
\]
The crucial point is that $u$ may not be differentiable with respect to $t$ but
satisfies \eqref{diveq} in the distributional sense. Regarding this point, a 
parabolic Poincar\'e type inequality for $u$ in the framework of Orlicz space, which is a larger class than the $L^p$ space, was  derived in \cite{DSSV17}.
In this paper we obtain Sobolev-Poincar\'e type inequalities for $u$ with weight $w=w(x,t)$ in the parabolic $A_p$
class and $G\in L^p_w(\Omega\times I,\R^n)$ for some $p>1$, in Theorem~\ref{Thmsolution} and its corollaries.  A typical
case of \eqref{diveq} is when $G(x,t)= \mathbf{A}(x,t)Du+ F(x,t)$, where $\mathbf{A}$ is
an $n\times n$ matrix satisfying suitable uniform ellipticity and
boundedness conditions. In this case there have been studied
regularity estimates for $Du$ when $F\in L^p_w(\Omega\times I, \R^n)$
in \cite{BPS16,Ngu17} and hence, applying our main result, 
improved regularity  for the solution  $u$ can be also observed, see Remark~\ref{rmk-application}.

Non-divergence type second order linear parabolic equations have the
form
\begin{equation}\label{nondiveq}
  u_t - a_{ij}(x,t) u_{x_ix_j} = f \quad \text{in }\ \Omega_T,
\end{equation}
where the $n\times n$ matrix $\mathbf{A}=[a_{ij}]$ satisfies suitable uniform ellipticity and boundedness conditions. 
For this equation, 
it is well known that if $\Omega$ is in $C^{1,1}$, $\mathbf A$ is continuous and $f\in L^p(\Omega_T)$, then there exists a strong solution $u$ in $W^{2,1}_p(\Omega_T)$, i.e., weak derivatives $u_t$, $Du$, $D^2u$ exist in $L^p$-space, with $u=0$ on the parabolic boundary of $\Omega_T$. (We may consider discontinuous coefficient matrix $\mathbf A$ in VMO type spaces, see \cite{KK07}.)
Here, $Du=D_xu$ and $D^2u=D^2_xu$ are the spatial gradient and Hessian of $u$, respectively.      
Regarding to the $W^{2,1}_p$-space, 
we have 
the following  Sobolev-Poincar\'e type inequality 
\begin{equation}\label{SPineq}
\left( \fint_{C_r}|Du-(Du)_{C_r}|^{pk}\,dz\right)^{\frac{1}{pk}} \le c\,r \left( \fint_{C_r}\big[|u_t|^p+|D^2u|^p\big]\, dz\right)^{\frac1p}
\end{equation} 
 for any parabolic cylinder $C_r$ and any function
$u\in W^{2,1}_p(C_r)$, where $k=\frac{n+2}{n+2-p}>1$ if $p<n+2$, see for instance 
\cite[Theorem 19]{Lie03}.  The main point in the above inequality is
that the right-hand side does not involve the derivative of $Du$ with respect to variable $t$ but $u_t$. Therefore it is enough to consider $u_t$ and $D^2u$ 
when
$L^p$-regularity theory is studied for \eqref{nondiveq}. In recent papers \cite{BL15,BLO18,DK18}
there have been studied strong solutions in the setting of weighted spaces, but
the weighted version of \eqref{SPineq} has not been investigated. In this paper, we derive the weighted version of \eqref{SPineq} in Theorem~\ref{Thmint} as a direct consequence of the main result.
Moreover, we also obtain its boundary version on an upper half region of a parabolic cube/cylinder for functions with zero value on the flat boundary.

The main difficulty is that, in contrast
to the unweighted case, we cannot take advantage of the classical
weighted Sobolev-Poincar\'e inequality \eqref{classical} on each time
slice, because  the restriction of a weight function in
the parabolic $A_p$-class on each time slice, $x\mapsto w(x,t)$, does not belong to the
$A_p$-class in general.  In order to overcome this difficulty, we
present an alternative approach. Its main idea is that 
 by making use of 
the parabolic Poincar\'e type estimates obtained in \cite{DSSV17} (cf.  Lemma~\ref{lempoinu}) we derive pointwise
estimates in terms of a parabolic version of Riesz potential, which is
called the \textit{caloric Riesz potential} (cf. \eqref{caloricRiesz})
 introduced in \cite{DM11} (see also \cite{KM14}),  
 and then we apply the boundedness of a maximal operator in $L^p_w$-spaces with the $A_p$ weight $w$ in the parabolic setting.
We notice that similar arguments have been used in, for instance,  \cite{FPW98, SW92}, where weighted Sobolev-Poincar\'e type inequalities are  derived in the setting of spaces of homogeneous type. In this paper, we are interested in solutions of parabolic PDEs and Sobolev-Poincar\'e type inequalities in relevant weighted spaces.

\section{\bf Preliminaries}\label{Sec2}

\subsection*{Notation}

We write $z=(x,t)=(x_1, \dots, x_n, t)$ 
as a point in $\R^n \times \R=\R^{n+1}$. 
We define the parabolic distance
$d_{\text{p}}$ between two points $z=(x,t)$ and $\tz=(\tilde{x},
\tilde{t})$ in $\R^{n+1}$ by
\[
d_{\text{p}}(z,\tilde z) :=\max\Big\{|x-\tilde x|, \sqrt{|t-\tilde t|}\Big\},
\]
where $|\cdot|$ is the Euclidean distance. For $\alpha>0$,  $z=(x,t)\in \R^{n+1}$ and $\mathbf r := (\tilde{\mathbf r}, r_{n+1})= (r_1,\dots,r_n,r_{n+1})$ with $r_i>0$, $i=1,\dots,n+1$, the $\alpha$-parabolic rectangle $Q^\alpha_{\mathbf r}(z)$ is defined by
$$
Q^\alpha_{\mathbf r}(z) := K_{\tilde{\mathbf r}}(x) \times ( t-\alpha r_{n+1}^2, t+\alpha r_{n+1}^2) 
$$
where $K_{\tilde{\mathbf r}}(x) := \{y=(y_1, \dots,y_n)\in \R^n: |x_i-y_i|< r_i,\  i=1,\dots, n \}$ is the $n$-dimensional rectangle.
In particular, for $r>0$, if $r_i=r$ for all $i=1,\dots,n+1$, we write 
$Q^\alpha_{r}(z)=Q^\alpha_{\mathbf r}(z)$. 
Note that $Q^\alpha_r(z)$ is called an $\alpha$-parabolic cube. In addition, 
for $\alpha,r>0$, the $\alpha$-parabolic cylinder $C^\alpha_r(z)$ is defined by
$$
C^\alpha_r(z) := B_r(x) \times ( t-\alpha r^2, t+\alpha r^2),
$$
where $B_r(x)= \{ y \in \R^{n} : |x-y|< r\}$ is the $n$-dimensional ball centered at $x$ with radius $r$.  In particular, when $\alpha=1$, we abbreviate
$Q^1_r(z)=Q_r(z)$ and $C^1_r(z)=C_r(z)$, which are called a parabolic cube and a parabolic cylinder, respectively. 
 
We also denote 
\begin{equation}\label{upperhalf}
Q_r^+(z)= Q_r(z) \cap \{ x_n>0 \} \textrm{ and }  C_r^+(z)= C_r(z) \cap \{ t>0 \}.
\end{equation}
 For the sake of simplicity, we omit the center $z$ when $z=0$ in the above notation, for example, $Q_r=Q_r(0)$, $Q_r^+=Q_r^+(0)$,\dots. In addition, we denote 
\begin{equation}\label{Tr}
 T_r = Q_r(0) \cap \left\{x_n = 0\right\}=C_r(0) \cap \left\{x_n = 0\right\}.
\end{equation} 

Let $U$ be a bounded open set in $\R^{n+1}.$ For a function $v: U  \rightarrow \R$, we denote the spatial gradient of $v$ by $Dv=(v_{x_1},\dots,v_{x_{n}})$,
the spatial Hessian of $v$ by $D^{2}v$,
and the time derivative of $v$ by $v_t$.
For an integrable function $g$ in 
$U$, we define the mean of $g$ in $U$ by 
\[
(g)_{U}:=\fint_U g(z) dz:=\frac{1}{|U|}\int_U g(z)\, dz.
\]
 Finally, the caloric Riesz potential of a measurable function $g$ in $\R^{n+1}$ is defined by
\begin{equation}\label{caloricRiesz}
\mathcal I_\beta g(z):= \int_{\R^{n+1}} \frac{g(\tz)}{d_{\mathrm{p}}(z,\tz)^{n+2-\beta}}\,d\tz, \quad z\in\R^n,
\end{equation}
where $0<\beta \le n+2$.

\subsection*{Parabolic $A_p$-class and weighted spaces} 
%
%
%
%
%
%

For $1<p<\infty$, 
a weight $w$ in $\R^{n+1}$, i.e., a locally integrable nonnegative function $w$  in $\R^{n+1}$,
is called a parabolic
\textit{$A_p$ weight}, denoted by $w \in A_p$, if  
$$ [w]_p:=\sup_{Q} \left( \fint_{Q} w\, dz \right)\left( \fint_{Q} w^{\frac{-1}{p-1}}\, dz\right)^{p-1} < \infty,$$
where the supremum is taken over all parabolic cubes $Q \subset \R^{n+1}$. Every parabolic $A_p$ weight has the doubling property, and monotonicity $A_{p_1} \subset A_{p_2}$ for $p_1 \leq p_2$.
We identify weight $w$ with measure
$w(E) := \int_{E} w(z)\, dz$,
for measurable sets $E \subset \R^{n+1}.$  For the properties of $A_p$ weight, we refer to, for instance, \cite{BPS16,Gra09}.    

Let $U\subset \R^{n+1}$ be bounded. Given $w \in A_p$, $L_w^p(U)$ denotes the \textit{weighted Lebesgue space} which contains all measurable functions $u$ on $U$ such that
$ \|u\|_{L_w^p(U)}:= \left( \int_{U} |u|^{p}w\, dz \right)^{1/p} < \infty$. We define the space $W_{p,w}^{2,1}(U)$ as the set of functions $u$ satisfying that $u,u_t,Du,D^2u$ exist  in $L_w^p(U)$.  When $w\equiv 1$, we write
$W_{p}^{2,1}(U)=W_{p,1}^{2,1}(U)$.  

For $\Omega\subset \R^{n}$, we define $W_{p}^{1}(\Omega)$ by the set of functions $u$ satisfying that $u,Du$ exist  in $L^p(\Omega)$. In addition, for $1\le p<\infty$ and an interval $I\subset \R$,  $u\in L^p(I,W^{1}_{p}(\Omega))$ means that  $u(\cdot,t)\in W^{1}_{p}(\Omega)$ a.e. $t\in I$ and $Du\in L^p(\Omega\times I)$.

\subsection*{Poincar\'e type inequalities}

We introduce Poincar\'e type inequalities under a parabolic equation in divergence form. 
 From now on,  if $a\le r_i \le b$ for all $i=1,2,\dots,n+1$ and for some $0<a<b$, we write  $a\le \mathbf r\le b$ where $\mathbf r=(r_1,\dots,r_{n+1})$.

\begin{lemma}[Theorem 2.8, \cite{DSSV17}]\label{lempoinu}
For $\alpha,r>0$, let $C=\Omega\times I \subset \R^n\times \R$ be an $\alpha$-parabolic cylinder $C^\alpha_r$ or $\alpha$-parabolic rectangle $Q^\alpha_{\mathbf r}$ with $r/2\le \mathbf r \le 2r$.  If $u\in L^1(I, W^1_1(\Omega))$ is a distributional solution of $u_t =\mathrm{div}\, G$ in $C$ 
with $G\in L^1(C,\R^n)$,
then we have
\begin{equation}\label{parapoinu}
 \fint_{C}|u-(u)_{C}|\,dz \le c\,r\fint_{C}|Du|\, dz+c \alpha r\fint_{C} |G|\, dz
\end{equation}
for some $c=c(n)>0$.
\end{lemma}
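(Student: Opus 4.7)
The plan is a two-step decomposition: control the spatial oscillation of $u$ on each time slice by the classical Poincar\'e inequality, and control the temporal oscillation of a smoothed spatial average by testing the distributional equation. Write $C=\Omega\times I$, where $\Omega$ is a ball or rectangle of inradius comparable to $r$ and $|I|$ is comparable to $\alpha r^2$.

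First I would pick a cutoff $\varphi\in C_c^\infty(\Omega)$ with $\int_\Omega \varphi\,dx = 1$ and $\|D\varphi\|_\infty \le c(n) r^{-(n+1)}$; such a $\varphi$ exists because $\Omega$ contains a concentric ball of radius $\gtrsim r$. Define
$$h(t) := \int_\Omega u(x,t)\,\varphi(x)\,dx, \qquad t\in I.$$
Testing the distributional equation $u_t=\mathrm{div}\,G$ against the tensor product $\varphi(x)\psi(t)$ for arbitrary $\psi\in C_c^\infty(I)$ yields $\int_I h\,\psi'\,dt = \int_I\!\int_\Omega G\cdot D\varphi\,\psi\,dx\,dt$, so $h$ admits an absolutely continuous representative on $I$ with
$$h'(t) = -\int_\Omega G(x,t)\cdot D\varphi(x)\,dx \in L^1(I).$$
This is the mechanism that sidesteps the lack of pointwise time differentiability of $u$.

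Second, set $\bar h := \fint_I h\,dt$ and estimate $\fint_C |u-(u)_C|\,dz \le 2\fint_C|u-\bar h|\,dz \le 2\fint_C|u(x,t)-h(t)|\,dz + 2\fint_I |h(t)-\bar h|\,dt$. For the spatial term, fix $t$ and split $\fint_\Omega |u-h(t)|\,dx \le \fint_\Omega|u-(u(\cdot,t))_\Omega|\,dx + |(u(\cdot,t))_\Omega - h(t)|$; the classical spatial Poincar\'e inequality on $\Omega$ bounds the first summand by $cr\fint_\Omega|Du(\cdot,t)|\,dx$, and the identity $(u)_\Omega - h = \int(u-(u)_\Omega)(|\Omega|^{-1}\chi_\Omega - \varphi)\,dx$ controls the second by the same quantity. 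Integrating in $t$ gives the first term on the right of \eqref{parapoinu}. For the temporal term, the absolute continuity of $h$ and the formula for $h'$ give
$$|h(t)-\bar h| \le \int_I |h'|\,d\tau \le \|D\varphi\|_\infty\,|C|\fint_C|G|\,dz \lesssim r^{-(n+1)}\cdot\alpha r^{n+2}\fint_C|G|\,dz = c\alpha r\fint_C|G|\,dz,$$
which is exactly the second term on the right of \eqref{parapoinu}.

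The only step that requires genuine care is the very first move: justifying rigorously that $h$ has an absolutely continuous representative under the sole hypothesis $u\in L^1(I,W^1_1(\Omega))$. This comes from the tensor-product testing together with a Fubini/density argument, perhaps combined with a mollification of $u$ in time (Steklov averages) to legitimize the evaluation $h(t)$ pointwise. Once that is in place, the remainder is a routine combination of the classical spatial Poincar\'e inequality and the fundamental theorem of calculus applied to $h$.
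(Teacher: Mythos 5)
Your argument is correct, and it is essentially the standard proof of this inequality: the paper itself gives no proof but cites \cite[Theorem 2.8]{DSSV17}, whose argument is the same weighted-mean scheme you use (slice-wise spatial Poincar\'e plus control of the time oscillation of $h(t)=\int_\Omega u\,\varphi\,dx$ through the equation tested with $\varphi(x)\psi(t)$, giving $h'\in L^1(I)$ and hence an absolutely continuous representative). Your version also covers the rectangle case $Q^\alpha_{\mathbf r}$ with $r/2\le\mathbf r\le 2r$ directly, which is exactly the ``no significant modification'' the paper alludes to, and the constants come out right: the spatial term is independent of $\alpha$, while $\|D\varphi\|_\infty|C|\lesssim r^{-(n+1)}\cdot\alpha r^{n+2}$ yields the factor $\alpha r$ in the $G$-term.
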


Note that in \cite[Theorem 2.8]{DSSV17}, 
only 
the $\alpha$-parabolic cylinders $C^\alpha_r$ are 
considered, but 
the inequality \eqref{parapoinu} still holds for the $\alpha$-parabolic rectangles $Q^\alpha_{\mathbf r}$ as stated in the above lemma without any significant modification to the proof.



\section{\bf Weighted Sobolev inequalities for solutions of parabolic equations in divergence form.}\label{sec4}

We consider a distributional solution $u$ to \eqref{diveq}, 
for which we obtain the following Sobolev-Poincar\'e type inequality
that is  the main result in this paper.

  

\begin{theorem}\label{Thmsolution}
Let $1<p<\infty$ and $w\in A_p$. If $u\in L^1((-r^2,r^2), W^1_1(K_r))$ with  $Du\in L^p_w(Q_r,\R^n)$ is a distributional solution to $u_t =\mathrm{div}\, G$ in $Q_r$, where $G\in L^p_w(Q_r,\R^n)$,
then $u\in L^{pk}_w(Q_r)$
with 
\begin{equation}\label{constk}
k=\frac{n+2}{n+1}+\delta>1 \quad \text{for some }\ \delta=\delta(n,p,[w]_p)>0,
\end{equation} 
 and we have
\[
\left( \frac{1}{w(Q_r)}\int_{Q_r}|u-(u)_{Q_r}|^{pk}w\,dz\right)^{\frac{1}{pk}} \le c\,r\left(\frac{1}{w(Q_r)}\int_{Q_r}\big[|Du|+|G|\big]^pw\, dz\right)^{\frac{1}{p}}
\]
for some $c=c(n,p,[w]_p)>0$. 

Moreover, the theorem holds true  if  we replace 
the $n$-dimensional cube $K_r$ and the parabolic cube $Q_r$ by the $n$-dimensional ball $B_r$ and the parabolic cylinder $C_r$, respectively.
\end{theorem}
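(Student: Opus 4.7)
The plan is to reduce the theorem to a pointwise estimate for $|u(z) - (u)_{Q_r}|$ in terms of the caloric Riesz potential of $h := |Du| + |G|$, and then apply a weighted Muckenhoupt-type boundedness for $\mathcal I_1$ on parabolic cubes. For the pointwise step, I would fix a Lebesgue point $z \in Q_r$ and consider a chain of concentric parabolic cubes $Q_{r_j}(z)$ with $r_j = r/2^j$, interpreted inside $Q_r$ by starting at an admissible $r_0$ comparable to $r$. Applying Lemma~\ref{lempoinu} on each $Q_{r_j}(z)$, telescoping the successive means, and using the Lebesgue differentiation theorem gives
\[
|u(z) - (u)_{Q_{r_0}(z)}| \leq c \sum_{j \geq 0} r_j \fint_{Q_{r_j}(z)} h \, d\tilde z.
\]
A standard reorganization of this sum against the parabolic dyadic annuli $Q_{r_j}(z) \setminus Q_{r_{j+1}}(z)$ dominates it by the truncated caloric Riesz potential $\mathcal I_1(h\chi_{Q^*})(z)$, where $Q^*$ is a fixed enlargement of $Q_r$. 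A further comparison between $(u)_{Q_{r_0}(z)}$ and $(u)_{Q_r}$, again through Lemma~\ref{lempoinu}, contributes at most a term of the same form, so that a.e.\ in $Q_r$,
\[
|u(z) - (u)_{Q_r}| \leq c \, \mathcal I_1(h \chi_{Q^*})(z).
\]

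The second step is a weighted bound for $\mathcal I_1$ on parabolic cubes: for $w \in A_p$ and $h$ supported in $Q^*$,
\[
\left( \frac{1}{w(Q_r)} \int_{Q_r} (\mathcal I_1 h)^{pk} w \, dz \right)^{\frac{1}{pk}} \leq c\,r \left( \frac{1}{w(Q_r)} \int_{Q^*} h^p w \, d\tilde z \right)^{\frac{1}{p}},
\]
with $k = \frac{n+2}{n+1} + \delta$ and $\delta = \delta(n,p,[w]_p) > 0$. This is obtained via a Hedberg-type splitting at scale $\rho$,
\[
\mathcal I_1 h(z) \leq c \rho \, M h(z) + \int_{Q^* \cap \{d_{\text{p}}(z,\tilde z) \geq \rho\}} \frac{h(\tilde z)}{d_{\text{p}}(z,\tilde z)^{n+1}} \, d\tilde z,
\]
where $M$ denotes the parabolic Hardy--Littlewood maximal operator. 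The near part is controlled by the $L^p_w$-boundedness of $M$, which is precisely equivalent to $w \in A_p$; the tail is estimated by Hölder's inequality against the dual weight $w^{-1/(p-1)}$ on a decomposition of $Q^*$ into parabolic annuli around $z$, using the parabolic $A_p$ condition. Optimizing in $\rho$ produces the Sobolev-type exponent $\frac{n+2}{n+1}$, and the extra gain $\delta > 0$ is extracted from the reverse Hölder self-improvement of parabolic $A_p$ weights: $w \in A_p$ implies $w \in A_{p-\epsilon}$ for some $\epsilon = \epsilon(n,p,[w]_p)>0$, which allows the Hedberg argument to close at a slightly better exponent. Finally, the doubling property of $w$ replaces $Q^*$ by $Q_r$ on the right-hand side.

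Combining the two steps yields the main estimate. The moreover clause with balls $B_r$ and cylinders $C_r$ follows verbatim, since Lemma~\ref{lempoinu} is valid for both parabolic cubes and cylinders and the parabolic $A_p$ condition is preserved under their comparison. The main obstacle is the second step: carrying out the Hedberg-type splitting and the reverse Hölder self-improvement cleanly in the parabolic setting with the parabolic distance $d_{\text{p}}$, while tracking the quantitative dependence of $\delta$ on $[w]_p$ so that $k$ indeed has the form $\frac{n+2}{n+1} + \delta$ with $\delta>0$.
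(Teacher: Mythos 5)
Your overall strategy is exactly the paper's: a telescoping chain of parabolic Poincar\'e inequalities (Lemma~\ref{lempoinu}) giving a pointwise bound by the caloric Riesz potential $\mathcal I_1\big((|Du|+|G|)\chi_{Q_r}\big)$, followed by a weighted bound for $\mathcal I_1$ proved by a Hedberg splitting with the parabolic maximal function, H\"older against the dual weight on the tail, and the self-improvement $w\in A_p\Rightarrow w\in A_q$ with $q<p$ to produce $k=\frac{q(n+2)}{q(n+2)-p}=\frac{n+2}{n+1}+\delta$; this second step is precisely the paper's Lemma~\ref{lemmaximal}, so that part of your plan is sound (with the minor caveat that the tail H\"older must be run with the improved exponent $q$, i.e.\ against $w^{-1/(q-1)}$, not $w^{-1/(p-1)}$, if the gain $\delta>0$ is to appear).

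The genuine gap is in the chaining step. You take concentric parabolic cubes $Q_{r_j}(z)$ centered at the Lebesgue point $z$ and claim you can start ``at an admissible $r_0$ comparable to $r$'' inside $Q_r$; for $z$ near $\partial Q_r$ no such cube exists, since the largest concentric cube contained in $Q_r$ has side length comparable to the distance from $z$ to the boundary, which can be arbitrarily small. You cannot instead let the cubes protrude beyond $Q_r$ (your ``enlargement $Q^*$''), because the equation $u_t=\operatorname{div} G$ holds only in $Q_r$ and $u$, $Du$, $G$ are only given there, so Lemma~\ref{lempoinu} is not applicable on such cubes; and your corrective term $|(u)_{Q_{r_0}(z)}-(u)_{Q_r}|$ picks up the uncontrolled factor $|Q_r|/|Q_{r_0}(z)|$ when $r_0\ll r$. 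The paper avoids this by setting $\tilde Q_j:=Q_{r_j}(\tilde z)\cap Q_r$, observing that these intersections are parabolic \emph{rectangles} with comparable side lengths (this is exactly why Lemma~\ref{lempoinu} is stated for rectangles $Q^\alpha_{\mathbf r}$ with $r/2\le \mathbf r\le 2r$), and starting the chain at $\tilde Q_0=Q_r$ itself so no comparison term is needed. The same issue makes your ``follows verbatim'' claim for the cylinder case too quick: intersections of cylinders are not cylinders, and the paper has to construct explicit inscribed cylinders $C^{\alpha_j}_{\rho_j}(z_j)\subset C_{r_j}(\tilde z)\cap C_r$ with $\tfrac12\le\alpha_j\le 4$ and $|\tilde C_j|\approx r_j^{n+2}$ before the chain argument goes through. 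These repairs are available with the tools you cite, but as written your chaining step fails for boundary-adjacent points.
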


\begin{remark}
A more specific expression of the constant $k$ can be found in \eqref{delta} in the proof of Lemma~\ref{lemmaximal} below. In the unweighted case (i.e., $w\equiv 1$) with $p<n+2$, we can choose $k=\frac{n+2}{n+2-p}$, since $w\equiv 1\in A_1$ and hence the constant $q$ in the proof of Lemma~\ref{lemmaximal} can be taken as $1$.
\end{remark}
In order to prove Theorem~\ref{Thmsolution}, we first  prove a higher integrability of the caloric Riesz potential in 
the weighted Lebesgue spaces under the parabolic setting. 
 
 \begin{lemma}\label{lemmaximal}
Let $1<p<\infty$ and $w\in A_p$. If $f\in L^p_w(\R^{n+1})$ and $f\equiv 0$ in $\R^{n+1}\setminus Q_r$, then we have
\begin{equation}\label{caloricestimate}
\left(\frac{1}{w(Q_r)}\int_{Q_r} [\mathcal I_1|f|]^{pk}w\,dz \right)^{\frac{1}{pk}}  \le c\,r \left(\frac{1}{w(Q_r)}\int_{Q_r}|f|^pw\,dz\right)^{\frac{1}{p}},
\end{equation}
where $k>1$ is given in \eqref{constk}, for some $c=c(n,p,[w]_p)>0$. 

Moreover, the lemma holds true  if we replace the parabolic cube $Q_r$ by the parabolic cylinder $C_r$.
\end{lemma}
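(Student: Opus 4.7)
The plan is to derive a Hedberg-type pointwise estimate for $\mathcal{I}_1|f|$ and then integrate against $w$ using Muckenhoupt's $L^p_w$-bound for the parabolic Hardy--Littlewood maximal function $M$. The strictly positive gain $\delta$ in the Sobolev exponent will come from the open property (self-improvement) of $A_p$: since $w\in A_p$, there exists $q=q(n,p,[w]_p)\in[1,p)$ with $w\in A_q$, and we set the H\"older exponent $\sigma:=p/q>1$, choosing $q$ so that $\sigma\in(1,n+2)$. In the unweighted case $w\equiv1\in A_1$ one takes $q=1$, hence $\sigma=p$, which will recover the classical Sobolev exponent $k=(n+2)/(n+2-p)$ whenever $p<n+2$.

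The first step is to control the unweighted $L^\sigma$-norm of $f$ on $Q_r$ by the weighted $L^p$-norm. H\"older's inequality with exponents $p/\sigma=q$ and $q'$, together with the dual-weight inequality $\int_{Q_r}w^{-1/(q-1)}\,dz\le[w]_q^{1/(q-1)}|Q_r|^{q/(q-1)}/w(Q_r)^{1/(q-1)}$ implied by $w\in A_q$, yields $(\int_{Q_r}|f|^\sigma\,dz)^{1/\sigma}\le C|Q_r|^{1/\sigma}\lambda$, where $\lambda:=(\frac{1}{w(Q_r)}\int_{Q_r}|f|^p w\,dz)^{1/p}$. Next, for $z\in Q_r$ and a scale $R\in(0,cr]$ to be optimized, split $\mathcal{I}_1|f|(z)$ into local and tail parts at scale $R$. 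A standard dyadic decomposition controls the local part by $CR\,M|f|(z)$, while H\"older with $L^\sigma$ and $L^{\sigma'}$ on the tail---whose radial integral $\int_{R<d_{\mathrm{p}}(z,\tilde z)}d_{\mathrm{p}}(z,\tilde z)^{-(n+1)\sigma'}\,d\tilde z\sim R^{n+2-(n+1)\sigma'}$ converges precisely because $\sigma<n+2$---combined with the previous bound gives a tail estimate of order $Cr^{(n+2)/\sigma}\lambda R^{1-(n+2)/\sigma}$. Balancing the two contributions through the choice $R\sim r(\lambda/M|f|(z))^{\sigma/(n+2)}$ produces the pointwise Hedberg-type bound
\begin{equation*}
\mathcal{I}_1|f|(z)\le C\,r\,\lambda^{\sigma/(n+2)}\,M|f|(z)^{(n+2-\sigma)/(n+2)}.
\end{equation*}

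Finally, I would raise this estimate to the $pk$-th power and integrate against $w$ on $Q_r$: the choice $k=(n+2)/(n+2-\sigma)$ makes the exponent of $M|f|$ equal to $p$, so that Muckenhoupt's theorem (valid because $w\in A_p$) delivers $\int_{Q_r}M|f|^p w\,dz\le C\int_{Q_r}|f|^p w\,dz=C\lambda^p w(Q_r)$. The algebraic identity $pk\sigma/(n+2)+p=p\sigma/(n+2-\sigma)+p=pk$ then combines the $\lambda$-powers into $\int_{Q_r}[\mathcal{I}_1|f|]^{pk}w\,dz\le Cr^{pk}\lambda^{pk}w(Q_r)$, which is exactly \eqref{caloricestimate} with $k=(n+2)/(n+2-\sigma)=(n+2)/(n+1)+\delta$ and $\delta>0$ because $\sigma>1$. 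The cylinder version follows from the cube version through the inclusions $C_r\subset Q_r\subset C_{\sqrt{n}\,r}$ together with the doubling property of $w\in A_p$. The subtlest point I expect is reconciling the three simultaneous constraints on $\sigma$---namely $\sigma>1$ (to obtain $\delta>0$), $\sigma<n+2$ (for the tail integral to converge), and $\sigma=p/q$ with $w\in A_q$ delivered by the open property---and handling the boundary case where the unconstrained optimal $R^\ast$ would exceed $cr$, which is done by capping $R$ at $cr$ and checking that the resulting pure-local bound still fits within the same Hedberg envelope.
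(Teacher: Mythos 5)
Your proposal is correct and follows essentially the same route as the paper: a Hedberg-type splitting of $\mathcal I_1|f|$ at a scale to be optimized, the local part bounded by the parabolic maximal function via a dyadic sum, the tail bounded by H\"older using $w\in A_q$ with $q\in(\max\{1,p/(n+2)\},p)$, balancing to get the pointwise bound, and then the weighted boundedness of the parabolic maximal operator, yielding exactly the paper's $k=\frac{n+2}{n+2-p/q}$; the cylinder case is likewise reduced to the cube case by inclusion and weight comparison. The only difference is cosmetic: you apply the $A_q$ duality early to pass to an unweighted $L^{\sigma}$ bound on $f$, whereas the paper carries the dual-weight integral through and invokes $A_q$ at the end.
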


\begin{proof}
Note that $w\in A_q$ for some $q\in (1,p)$ 
and $[w]_q>0$ depending only on $n$, $p$ and $[w]_p$, see \cite{Gra09}. Moreover, we shall assume that $q>\frac{p}{n+2}$.
For any $\epsilon>0$ we have 
$$
\mathcal I_1|f|(z) = \int_{d_{\text{p}}(\tz,z)\leq \epsilon} \frac{|f(\tz)|}{d_{\text{p}}(z,\tz)^{n+1}} \,d\tz + \int_{d_{\text{p}}(\tz,z)>\epsilon} \frac{|f(\tz)|}{d_{\text{p}}(z,\tz)^{n+1}}\,d\tz =: I_1 +I_2.
$$
For $I_1$,
$$\begin{aligned}
I_1 & = \sum_{j=0}^\infty  \int_{2^{-j-1}\epsilon< d_{\text{p}}(\tz,z)\leq 2^{-j}\epsilon} \frac{|f(\tz)|}{d_{\text{p}}(z,\tz)^{n+1}} \,d\tz 
\leq  \sum_{j=0}^\infty  2^{(j+1)(n+1)}\epsilon^{-(n+1)}\int_{d_{\text{p}}(\tz,z)\leq 2^{-j}\epsilon} |f(\tz)| \,d\tz\\
&\leq   c(n) \epsilon \sum_{j=0}^\infty  2^{-j}\fint_{Q_{2^{-j}\epsilon}(z)} |f(\tz)| \,d\tz  \leq  c(n)  \epsilon \mathcal{M} f(z)  \sum_{j=0}^\infty  2^{-j} \leq c(n) \epsilon \mathcal{M} f(z),
\end{aligned}$$
where $\mathcal M f (z):=\sup_{s>0} \fint_{Q_s(z)} |f(\tz)|\,d\tz$ is
the  parabolic maximal
function of $f$, and we used the fact that $|Q_{2^{-j}\epsilon}(z)| = 2^{n+1}(2^{-j}\epsilon)^{n+2}$.  For $I_2$, by H\"older's inequality and the facts that $f\equiv 0$ on $(Q_r)^{\mathrm c}$ and $w\in A_q$, 
\[
\begin{aligned}
I_2 & \leq \left(\int_{Q_r}|f|^p w\,d\tz\right)^{\frac{1}{p}} \left(\int_{Q_r} w^{-\frac{1}{q-1}}\,d\tz\right)^{\frac{q-1}{p}} 
\left(\int_{d_{\text{p}}(\tz,z)>\epsilon}d_{\text{p}}(z,\tz)^{-\frac{p(n+1)}{p-q}}\,dz\right)^{\frac{p-q}{p}}\\
&\leq c \left(\int_{Q_r}|f|^p w\,d\tz\right)^{\frac{1}{p}} \left(\int_{Q_r} w^{-\frac{1}{q-1}}\,d\tz\right)^{\frac{q-1}{p}} 
\epsilon^{1-\frac{q(n+2)}{p}}. 
\end{aligned}
\]
Note that 
in the last inequality we used the following fact  
$$\begin{aligned}
& \int_{d_{\text{p}}(\tz,z)>\epsilon}d_{\text{p}}(z,\tz)^{-\frac{p(n+1)}{p-q}}\,dz\\
&\qquad = 2\int^{\epsilon^2}_0\int_{\{x\in\R^n:|x|> \epsilon\}} |x|^{-\frac{p(n+1)}{p-q}}\,dx\,dt +2\int^\infty_{\epsilon^2}\int_{\{x\in\R^n: |x|> \sqrt{t}\}}|x|^{-\frac{p(n+1)}{p-q}}\,dx\,dt  \\
&\qquad \quad+2\int^\infty_{\epsilon^2}\int_{\{x\in \R^n: 0<|x|\leq  \sqrt{t}\}} 
t^{-\frac{p(n+1)}{2(p-q)}} \,dx\,dt\\
&\qquad \le c \int^{\epsilon^2}_0 \epsilon^{-\frac{p(n+1)}{p-q}+n}  \,dt + c \int^{\epsilon^2}_0 t^{-\frac{p(n+1)}{2(p-q)}+\frac{n}{2}}  \,dt = 
c \epsilon^{-\frac{p(n+1)}{p-q}+n+2}.
\end{aligned}$$

Now, we take $\epsilon>0$ such that
$$
\epsilon \mathcal{M} f(z)=\left(\int_{Q_r}|f|^p w\,d\tz\right)^{\frac{1}{p}} \left(\int_{Q_r} w^{-\frac{1}{q-1}}\,d\tz\right)^{\frac{q-1}{p}} 
\epsilon^{1-\frac{q(n+2)}{p}},
$$ 
hence 
$$
\epsilon =[\mathcal{M} f(z)]^{-\frac{p}{q(n+2)}}\left(\int_{Q_r}|f|^p w\,d\tz\right)^{\frac{1}{q(n+2)}} \left(\int_{Q_r} w^{-\frac{1}{q-1}}\,d\tz\right)^{\frac{q-1}{q(n+2)}}.
$$
Therefore, we obtain
$$
\mathcal I_1|f|(z) = I_1+I_2 \le c  [\mathcal{M} f(z)]^{1-\frac{p}{q(n+2)}}\left(\int_{Q_r}|f|^p w\,d\tz\right)^{\frac{1}{q(n+2)}} \left(\int_{Q_r} w^{-\frac{1}{q-1}}\,d\tz\right)^{\frac{q-1}{q(n+2)}}.
$$
Then, letting 
\begin{equation}\label{delta}
k=\frac{q(n+2)}{q(n+2)-p}=\frac{n+2}{n+2-p/q}=:\frac{n+2}{n+1} +\delta,
\end{equation}
we have
$$
\left(\int_{Q_r} [\mathcal I_1|f|]^{pk} w\, dz \right)^{\frac{1}{pk}} \le c \left(\int_{Q_r}|f|^p w\,d\tz\right)^{\frac{1}{q(n+2)}} \left(\int_{Q_r} w^{-\frac{1}{q-1}}\,d\tz\right)^{\frac{q-1}{q(n+2)}} \left( \int_{Q_r} [\mathcal{M} f]^p w\,dz  \right)^{\frac{1}{pk}}.
$$
Since $w\in A_p$, by the boundedness of the maximal operator in $L^p_w(\R^{n+1})$ in the parabolic setting, see \cite[Eq. (2.4)]{BPS16} and \cite{Gra09}, and using the fact that $f\equiv 0$ in $(Q_r)^{\mathrm c}$, 
$$
\left(
\int_{Q_r} [\mathcal I_1|f|]^{pk} w\, dz \right)^{\frac{1}{pk}} \le c  \left(\int_{Q_r}|f|^p w\,dz\right)^{\frac{1}{p}} \left(\int_{Q_r} w^{-\frac{1}{q-1}}\,dz\right)^{\frac{q-1}{q(n+2)}}. 
$$
Finally, since $w\in A_q$, 
$$\begin{aligned}
\left(
\int_{Q_r} [\mathcal I_1|f|]^{pk} w\, dz \right)^{\frac{1}{pk}} & \le  c \left(\int_{Q_r}|f|^p w\,dz\right)^{\frac{1}{p}} |Q_r|^{\frac{q-1}{q(n+2)}}\left(\frac{w(Q_r)}{|Q_r|}\right)^{-\frac{1}{q(n+2)}}\\
& \le c\,r \left(\frac{1}{w(Q_r)}\int_{Q_r}|f|^p w\,dz\right)^{\frac{1}{p}} w(Q_r)^{\frac{1}{pk}}. 
\end{aligned} $$
This implies \eqref{caloricestimate}.

If we consider the parabolic cylinder $C_r$ instead of $Q_r$, the result directly follows  from  \eqref{caloricestimate} and the fact that
\begin{equation}\label{wQQ+}
1\le \frac{w(Q_r)}{w(A)}\le [w]_p \left(\frac{|Q_r|}{|A|} \right)^p, \quad \text{whenever }\ A\subset Q_r
\end{equation}
(see \cite[Eq. (9.2.1)]{Gra09}). Indeed, since $C_r\subset Q_r$ and $f\equiv 0$ in $\R^{n+1}\setminus C_r$,
\[\begin{aligned}
\left(\frac{1}{w(C_r)}\int_{C_r} [\mathcal I_1|f|]^{pk}w\,dz \right)^{\frac{1}{pk}} & \leq c\left(\frac{1}{w(Q_r)}\int_{Q_r} [\mathcal I_1|f|]^{pk}w\,dz \right)^{\frac{1}{pk}}  \\
&\le c\,r \left(\frac{1}{w(Q_r)}\int_{Q_r}|f|^pw\,dz\right)^{\frac{1}{p}} \le  c\,r \left(\frac{1}{w(C_r)}\int_{C_r}|f|^pw\,dz\right)^{\frac{1}{p}}.
\end{aligned}\]

\end{proof}

Now we start the proof of Theorem \ref{Thmsolution}.

\begin{proof}[Proof of Theorem \ref{Thmsolution}]\ \\
\indent We first consider the parabolic cube $Q_r$. It suffices to prove the theorem for the case that $(u)_{Q_r}=0$, since $v:=u-(u)_{Q_r}$ also satisfies $v_t=\mathrm{div}\,G$ in the distributional sense.

Let  $\tz\in Q_r$ satisfy that
\begin{equation}\label{pf0}
\lim_{\rho\to 0}\fint_{Q_{
\rho}(
\tilde z)} |u(z)-u(\tilde z)|\,dz =0.
\end{equation}
Such a point $\tz$ is called the parabolic Lebesgue point of $u$. 
One can easily see that the set of points that are not  the parabolic Lebesgue points  has Lebesgue measure zero in $\R^{n+1}$. Define $r_j:= 2^{1-j}r$, $j=0,1,2,\dots$, and  $\tilde Q_{j} :=Q_{r_j}(\tilde z)\cap Q_r$. Then we see that $\tilde Q_{0}=Q_r$ and $\tilde Q_{j}$ is a parabolic rectangle  $Q^1_{\mathbf r}(\xi)$ with $r_j/2\le \mathbf r\le r_j$ for some $\xi \in Q_r$, hence $|\tilde Q_{j}| \approx |Q_{r_j}(\tz)| \approx r_j^{n+2}$, where relevant constants depend only on $n$.

Applying \eqref{parapoinu}  in Lemma~\ref{lempoinu} with $C=\tilde Q_j$ (i.e., $\alpha=1$),  we have 
\begin{equation}\label{pf1}\begin{aligned}
|u(\tz)|  \leq \sum_{j=0}^\infty |(u)_{\tilde Q_{j}}-(u)_{\tilde Q_{j+1}}| \leq c \sum_{j=0}^\infty \fint_{\tilde Q_{j}} |u-(u)_{\tilde Q_{j}}|\,dz  \le c\sum_{j=0}^\infty \fint_{\tilde Q_{j}} r_{j} \left[|Du|+|G|\right]\,dz.
 \end{aligned}\end{equation}
Then, since $Q_{r_j}(\tz)=\ \cdot  \hspace{-0.3cm}\bigcup_{i=j}^\infty  (Q_{r_i}(\tz)\setminus Q_{r_{i+1}}(\tz))$, for almost every $\tz \in Q_r$,
 $$\begin{aligned}
|u(\tz)|& \le c \sum_{j=0}^\infty \sum_{i=j}^\infty \int_{(Q_{r_i}(\tz)\setminus Q_{r_{i+1}}(\tz))\cap Q_r} r_{j}^{-n-1} [|Du|+|G|]\,dz  \\
& =  c \sum_{i=0}^\infty \bigg(\sum_{j=0}^i r_j^{-n-1}\bigg) \int_{(Q_{r_i}(\tz)\setminus Q_{r_{i+1}}(\tz))\cap Q_r}  [|Du|+|G|] \,dz\\
& = c \sum_{i=0}^\infty  \bigg(\sum_{j=0}^i 2^{-(n+1)(i-j)}\bigg) \int_{(Q_{r_i}(\tz)\setminus Q_{r_{i+1}}(\tz))\cap Q_r}r_i^{-n-1}  [|Du|+|G|] \,dz\\
&\le c \sum_{i=0}^\infty \int_{(Q_{r_i}(\tz)\setminus Q_{r_{i+1}}(\tz))\cap Q_r} r_i^{-n-1} [|Du|+|G|] \,dz\\
& \le c  \int_{Q_r}   \frac{|Du(z)|+|G(z)|}{d_{\text{p}}(z,\tz)^{n+1}}\,dz =c \mathcal I_1(|Du|+|G|)\chi_{Q_r} (\tz).
\end{aligned}$$
Finally, applying Lemma~\ref{lemmaximal} with $f=(|Du|+|G|)\chi_{Q_r}$, we have
\[\begin{aligned}
\left(\frac{1}{w(Q_r)}\int_{Q_r} |u|^{pk}w\,dz \right)^{\frac{1}{pk}} & \le \left(\frac{1}{w(Q_r)}\int_{Q_r} \big[\mathcal I_1(|Du|+|G|)\chi_{Q_r}\big]^{pk}w\,dz \right)^{\frac{1}{pk}}  \\
&\le c\,r \left(\frac{1}{w(Q_r)}\int_{Q_r}\big[|Du|+|G|\big]^pw\,dz\right)^{\frac{1}{p}}.
\end{aligned}\]
This completes the proof.

We next consider the parabolic cylinder $C_r$. Let  $\tz=(\tilde x,\tilde t)\in C_r$ satisfy that 
\[
\lim_{\rho\to 0}\fint_{C_{
\rho}(
\tilde z)} |u(z)-u(\tilde z)|\,dz =0.
\] 
Note that this equality is equivalent to \eqref{pf0}.
Define $r_j:= 2^{1-j}r$, $j=0,1,2,\dots$. Then there exists $j_0\in \mathbb{N}$ such that $C_{r_{j_0+1}}(\tilde z)\subset C_r \not\subset C_{r_{j_0}}(\tilde z)$. Put $\tilde C_{0}=C_r$ and $\tilde C_{j}=C_{r_j}(\tz)$ for $j >j_0$. On the other hand, for $j=1,\dots,j_0$, 
we can find $\rho_j,\alpha_j>0$ and $z_j=(y_j,\tau_j)\in C_{r_j}(\tz)\cap C_r$ such that
\[
\tfrac{1}{2}r_j\le \rho_j \le r_j, \ \ \tfrac{1}{2}\le \alpha_j\le 4,\ \ \text{and}\ \ \tilde C_j :=C^{\alpha_j}_{\rho_j}(z_j)\subset C_{r_j}(\tz)\cap C_r.
\]
Indeed, we can choose
\[
\rho_j:= 
\left\{\begin{array}{ccl} 
\frac{r_j+ r-|\tilde x|}{2} & \text{if} &  r_j > r-|\tilde x|, \\
r_j & \text{if} &  r_j \le r-|\tilde x|, 
\end{array}\right.
\quad 
\alpha_j:= 
\left\{\begin{array}{ccl} 
\tfrac{r_j^2+r^2-|\tilde t|}{2\rho_j^2} & \text{if} &  r_j^2 > r^2-|\tilde t|, \\
\tfrac{r_j^2}{\rho_j^2} & \text{if} &  r_j^2 \le r^2-|\tilde t |,
\end{array}\right.
\]
\[
y_j:= 
\left\{\begin{array}{ccl} 
(r-\rho_j)\frac{\tilde x}{|\tilde x|} & \text{if} &  r_j > r-|\tilde x|, \\
\tilde x & \text{if} &  r_j \le r-|x|, 
\end{array}\right.
\quad \text{and}\quad
\tau_j:= 
\left\{\begin{array}{ccl} 
(r^2-\alpha_j\rho_j^2)\tfrac{\tilde t}{|\tilde t|} & \text{if} &  r_j^2 > r^2 -|\tilde t|, \\
\tilde t & \text{if} &  r_j^2 \le r^2-|\tilde t|,
\end{array}\right.
\]
so that $B_{\rho_j}(y_j)$ is the largest ball in $B_{r_j}(\tilde x)\cap B_r$ and $(\tau_j-\alpha_j \rho_j^2,\tau_j+\alpha_j \rho_j^2)= (\tilde t- r_j^2,\tilde t + r_j^2)\cap (- r^2,r^2)$.
Note that for every $j=0,1,2,\dots$, $\tilde C_{j+1} \subset \tilde C_{j} \subset  \tilde C_0 =C_r$ and  $|\tilde C_{j}| \approx r_j^{n+2}$ with relevant constant depending only on $n$ and independent of $j$.
Therefore, applying \eqref{parapoinu} in Lemma~\ref{lempoinu}  with $C=\tilde C_j$, we have 
\[\begin{aligned}
|u(\tz)|  \leq c \sum_{j=0}^\infty \fint_{\tilde C_{j}} |u-(u)_{\tilde C_{j}}|\,dz & \le c\sum_{j=0}^\infty \fint_{\tilde C_{j}} r_{j} \left[|Du|+|G|\right]\,dz \\
& \le c\sum_{j=0}^\infty \fint_{C_{r_j}(\tz)\cap C_r} r_{j} \left[|Du|+|G|\right]\,dz, 
\end{aligned}\]
which is the counterpart of \eqref{pf1}. The rest part of the proof is exactly same as the one for the parabolic 
cube $Q_r$. 
\end{proof}

The following higher integrability of $|u|^p w$ follows from Theorem~\ref{Thmsolution}.
\begin{corollary}\label{Cor1}
Let $1<p<\infty$ and $w\in A_p$. If $u\in L^1((-r^2,r^2), W^1_1(K_r))$ with  $Du\in L^p_w(Q_r,\R^n)$ is a distributional solution to $u_t =\mathrm{div}\, G$ in $Q_r$, where $G\in L^p_w(Q_r,\R^n)$,
 then $|u|^pw \in L^{\gamma}(Q_r)$ for some $\gamma \in (1,k)$ depending only on $n$, $p$ and $[w]_p$, where $k>1$ is given in \eqref{constk} (see \eqref{gamma} below), and we have
$$\begin{aligned}
&\left(\frac{1}{|Q_r|}\int_{Q_r} \big[|u-(u)_{Q_r}|^{p}w\big]^{\gamma}\,dz \right)^{\frac{1}{p\gamma}}   \le c \, r \left(\frac{1}{|Q_r|}\int_{Q_r}\left[|Du|^p+|G|^p\right]w\,dz\right)^{\frac{1}{p}}
\end{aligned}$$
for some $c=c(n,p,[w]_p)>0$. 

Moreover, the corollary holds true if  we replace the parabolic cube $Q_r$ by the parabolic cylinder $C_r$.
\end{corollary}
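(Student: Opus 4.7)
The plan is to derive the corollary directly from Theorem~\ref{Thmsolution} by trading the weighted integration against $w$ for an unweighted integration against a power of $w$, via H\"older's inequality combined with the self-improving (reverse H\"older) property of $A_p$ weights. Since only $Q_r$ is treated, the cylinder case follows by the same argument.

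First I would invoke the reverse H\"older property: since $w\in A_p$, there exists $\sigma=\sigma(n,p,[w]_p)>1$ such that
\[
\left(\fint_{Q_r} w^\sigma \,dz\right)^{1/\sigma} \le c \fint_{Q_r} w\,dz,
\]
which rewrites as $\int_{Q_r} w^s\,dz \le c\, w(Q_r)^s |Q_r|^{1-s}$ for any $1\le s\le \sigma$. Next, fix any $\gamma$ with $1<\gamma\le \sigma k/(k+\sigma-1)$ so that
\begin{equation}\label{gamma}
s:=\frac{\gamma(k-1)}{k-\gamma}\in (1,\sigma].
\end{equation}
The factorization
\[
|u-(u)_{Q_r}|^{p\gamma} w^\gamma = \bigl(|u-(u)_{Q_r}|^{pk} w\bigr)^{\gamma/k}\cdot w^{\gamma(k-1)/k}
\]
together with H\"older's inequality with exponents $k/\gamma$ and $k/(k-\gamma)$ gives
\[
\int_{Q_r} |u-(u)_{Q_r}|^{p\gamma}w^{\gamma}\,dz \le \left(\int_{Q_r} |u-(u)_{Q_r}|^{pk}w\,dz\right)^{\gamma/k}\left(\int_{Q_r}w^{s}\,dz\right)^{(k-\gamma)/k}.
\]

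Then I would substitute the reverse H\"older bound on $\int w^s$ and the estimate from Theorem~\ref{Thmsolution} rewritten as
\[
\int_{Q_r}|u-(u)_{Q_r}|^{pk}w\,dz \le c\, r^{pk}\, w(Q_r)^{1-k}\left(\int_{Q_r}\bigl[|Du|+|G|\bigr]^p w\,dz\right)^{k}.
\]
With the choice of $s$ in \eqref{gamma}, the exponent of $w(Q_r)$ collapses to $\frac{\gamma(1-k)}{k}+\frac{s(k-\gamma)}{k}=0$, while the exponent of $|Q_r|$ simplifies to $(1-s)(k-\gamma)/k = 1-\gamma$. After dividing by $|Q_r|$ and taking the $(p\gamma)$-th root, the factor $|Q_r|^{(1-\gamma)/(p\gamma) - 1/(p\gamma)}=|Q_r|^{-1/p}$ reassembles with the right-hand integral to produce exactly the claimed normalized estimate.

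The main obstacle is purely bookkeeping: verifying that the range of $\gamma$ in \eqref{gamma} is non-empty (which hinges on $\sigma>1$, i.e., on the self-improvement of $A_p$) and that the $w(Q_r)$ powers cancel so that the remaining $|Q_r|$ power matches the target normalization. No new analytic input beyond Theorem~\ref{Thmsolution} and the reverse H\"older inequality is required.
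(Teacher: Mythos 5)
Your proposal is correct and follows essentially the same route as the paper: the paper also combines the reverse H\"older (self-improving) property of $A_p$ weights with H\"older's inequality at exponents $k/\gamma$ and $k/(k-\gamma)$, choosing $\gamma$ so that $\frac{(k-1)\gamma}{k-\gamma}$ lies below the reverse H\"older exponent, and then applies Theorem~\ref{Thmsolution}; the only difference is cosmetic (the paper keeps all integrals normalized by $w(Q_r)$ and converts to the $|Q_r|$-normalization at the end, while you track the powers of $w(Q_r)$ and $|Q_r|$ explicitly and verify they cancel to $|Q_r|^{1-\gamma}$). Your exponent bookkeeping checks out, so no gap.
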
 
%
%
%
%
%

\begin{proof} We recall the following reverse H\"older type inequality for an $A_p$-weight:
\begin{equation}\label{reverse}
\left(\fint_{Q_r} w^{1+\epsilon_0}\,dz\right)^{\frac{1}{1+\epsilon_0}}\le c \fint_{Q_r} w\,dz,
\end{equation}
where $\epsilon_0,c>0$ depend only on $n$, $p$ and $[w]_p$ (see \cite{Gra09}). Then choose $\gamma\in(1,k)$ such that
\begin{equation}\label{gamma}
1< \frac{(k-1)\gamma}{k-\gamma} < 1+\epsilon_0.
\end{equation}
 By H\"older's inequality, 
$$\begin{aligned}
\frac{1}{w(Q_r)}\int_{Q_r} &\big[|u-(u)_{Q_r}|^{p}w\big]^{\gamma}\,dz  = \frac{1}{w(Q_r)}\int_{Q_r} |u-(u)_{Q_r}|^{p\gamma}w^{\frac{\gamma}{k}}w^{\gamma-\frac{\gamma}{k}}\,dz \\
& \quad \leq \left(\frac{1}{w(Q_r)}\int_{Q_r} |u-(u)_{Q_r}|^{pk}w\,dz \right)^{\frac{\gamma}{k}}\left(\frac{1}{w(Q_r)}\int_{Q_r} w^{\frac{(k-1)\gamma}{k-\gamma}}\,dz\right)^{\frac{k-\gamma}{k}},
\end{aligned}$$
and moreover, using H\"older's inequality with \eqref{gamma} and \eqref{reverse},
$$\begin{aligned}
\left(\frac{1}{w(Q_r)}\int_{Q_r} w^{\frac{(k-1)\gamma}{k-\gamma}}\,dz\right)^{\frac{k-\gamma}{k}} & =  \left(\frac{|Q_r|}{w(Q_r)}\right)^{\frac{k-\gamma}{k}}\left(\fint_{Q_r} w^{\frac{(k-1)\gamma}{k-\gamma}}\,dz\right)^{\frac{k-\gamma}{k}}\\
&\le c \left(\frac{|Q_r|}{w(Q_r)}\right)^{\frac{k-\gamma}{k}}\left(\fint_{Q_r} w\,dz\right)^{\frac{(k-1)\gamma}{k}} =c\left(\frac{w(Q_r)}{|Q_r|}\right)^{\gamma-1}.
\end{aligned}$$
Therefore, it directly follows from Theorem~\ref{Thmsolution} that
$$\begin{aligned}
&\left(\frac{1}{w(Q_r)}\int_{Q_r}\big[ |u-(u)_{Q_r}|^{p}w\big]^{\gamma}\,dz \right)^{\frac{1}{p\gamma }}\\
&\qquad \le c\,  r \left(\frac{1}{w(Q_r)}\int_{Q_r}\left[|Du|^p+|G|^p\right]w\,dz\right)^{\frac{1}{p}} \left(\frac{w(Q_r)}{|Q_r|}\right)^{\frac{\gamma-1}{\gamma p}},
\end{aligned}$$
which implies the desired estimates.
\end{proof}

%

We next consider 
distributional solutions to
\eqref{diveq} in the upper half parabolic cube $Q_r^+$, or cylinder $C^+_r$, in \eqref{upperhalf} with zero boundary condition on $T_r$ in \eqref{Tr}.

\begin{theorem}\label{Thmsolutionhalf}
Let $1<p<\infty$ and $w\in A_p$. If $u\in L^1((-r^2,r^2), W^1_1(K_r^+))$ with  $Du\in L^p_w(Q_r,\R^n)$ is a distributional solution of $u_t =\mathrm{div}\, G$ in $Q_r^+$  with $u=0$ on $T_r$, where $G\in L^p_w(Q_r^+,\R^n)$,
then $u\in L^{pk}_w(Q_r^+)$ with $k>1$ given in \eqref{constk}, and we have
\begin{equation}\label{sobopoinbd}
\left( \frac{1}{w(Q_r^+)}\int_{Q^+_r}|u|^{pk}w\,dz\right)^{\frac{1}{pk}} \le c\,r\left(\frac{1}{w(Q_r^+)}\int_{Q^+_r}\big[|Du|+|G|\big]^pw\, dz\right)^{\frac{1}{p}}
\end{equation}
for some $c=c(n,p,[w]_p)>0$. 

Moreover, the theorem holds true if  we replace the parabolic cube $Q_r$ by the parabolic cylinder $C_r$.

\end{theorem}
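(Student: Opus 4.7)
The plan is to reduce to the interior estimate of Theorem~\ref{Thmsolution} by symmetric extension across the flat face $T_r$. Let $R(x',x_n,t)=(x',-x_n,t)$. On $Q_r$ I define $\tilde u$ as the odd extension of $u$ in $x_n$, $\tilde G_i$ (for $i<n$) as the odd extension of $G_i$ and $\tilde G_n$ as the even extension of $G_n$, and $\tilde w$ as the even extension of $w$. The hypothesis $u|_{T_r}=0$ ensures $\tilde u\in L^1((-r^2,r^2),W^1_1(K_r))$, with $D\tilde u$ odd in its tangential components and even in its normal component, matching the parities of $\tilde G$.

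The first task is to check that $\tilde u_t=\operatorname{div}\tilde G$ in $Q_r$ distributionally. For $\varphi\in C^\infty_0(Q_r)$, decompose $\varphi=\varphi_o+\varphi_e$ into odd/even parts in $x_n$; by the chosen parities only $\varphi_o$ contributes, and both sides of the target identity reduce to twice the corresponding integrals over $Q_r^+$ against $\varphi_o$ and $D_x\varphi_o$. Since $\varphi_o$ vanishes on $T_r$ but is not compactly supported in $Q_r^+$, insert a cutoff $\eta_\epsilon(x_n)$ with $\eta_\epsilon=0$ for $x_n\leq\epsilon/2$ and $\eta_\epsilon=1$ for $x_n\geq\epsilon$, so that $\eta_\epsilon\varphi_o$ is admissible on $Q_r^+$; the distributional equation for $u$ then yields the desired identity up to the error term $\int_{Q_r^+}G_n\varphi_o\,\eta_\epsilon'\,dz$. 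Since $|\varphi_o|\leq C x_n$ makes the integrand uniformly bounded on a strip of width $\epsilon$, this error vanishes as $\epsilon\to 0$ by absolute continuity of $G_n\in L^1(Q_r^+)$.

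The second task is to verify $\tilde w\in A_p$ with $[\tilde w]_p\leq C(n,p)[w]_p$. For parabolic cubes disjoint from $T_r$ this is immediate from $\tilde w=w$ or $\tilde w=w\circ R$. For a parabolic cube $Q$ straddling $T_r$, the set $Q\cup R(Q)$ is contained in a parabolic cube $Q^*$ centered on $T_r$ with $|Q^*|\leq C(n)|Q|$; then $\int_Q\tilde w\leq\int_{Q^*}\tilde w=2\int_{Q^*\cap\{x_n>0\}}w\leq 2\int_{Q^*}w$, and likewise for $w^{-1/(p-1)}$, so the $A_p$ quotient for $\tilde w$ on $Q$ is controlled by $[w]_p$ times a dimensional constant.

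With both ingredients in hand, I apply Theorem~\ref{Thmsolution} to $(\tilde u,\tilde G,\tilde w)$ on $Q_r$. Odd symmetry forces $(\tilde u)_{Q_r}=0$, so the mean subtraction disappears from the conclusion. The identities $\int_{Q_r}|\tilde u|^{pk}\tilde w\,dz=2\int_{Q_r^+}|u|^{pk}w\,dz$, $\tilde w(Q_r)=2w(Q_r^+)$, and the analogous ones for $D\tilde u$ and $\tilde G$ translate the resulting inequality to \eqref{sobopoinbd}. The parabolic-cylinder case is handled verbatim with $B_r$ in place of $K_r$. I expect the $A_p$ preservation under even reflection to be the most delicate ingredient; the symmetrization bookkeeping and the distributional verification are otherwise routine.
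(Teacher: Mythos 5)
Your argument is correct and follows the same reflection strategy as the paper: odd extension of $u$ and of the tangential components of $G$, even extension of $G_n$, verification of $\tilde u_t=\mathrm{div}\,\tilde G$ in $Q_r$ by a cutoff in $x_n$ (your reduction to the odd part $\varphi_o$ together with $|\varphi_o|\le Cx_n$ is the same estimate as the paper's error terms $I_1,I_2\to0$), the use of $u=0$ on $T_r$ only to get $\tilde u\in L^1((-r^2,r^2),W^1_1(K_r))$, and finally Theorem~\ref{Thmsolution} on $Q_r$ with $(\tilde u)_{Q_r}=0$. The genuine difference is the treatment of the weight. The paper applies Theorem~\ref{Thmsolution} with the original weight $w$ on $Q_r$ and passes between $w(Q_r)$ and $w(Q_r^+)$ via \eqref{wQQ+}; you instead replace $w$ by its even reflection $\tilde w$ and prove $[\tilde w]_p\le C(n,p)[w]_p$. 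Your $A_p$-preservation argument is correct: for a parabolic cube $Q$ straddling the hyperplane, $Q\cup R(Q)$ (with $R(x',x_n,t)=(x',-x_n,t)$) sits inside a parabolic cube $Q^*$ centered on the hyperplane $\{x_n=0\}$ (this, rather than literally on $T_r$, is what you need, since the $A_p$ condition ranges over all cubes in $\R^{n+1}$) with $|Q^*|\le 2^{n+2}|Q|$, and your averaging estimate then gives the claim. This extra lemma buys exactness in the final transfer: with $\tilde w$, both sides of the interior inequality convert to integrals over $Q_r^+$ with a clean factor $2$. With the unreflected $w$, as in the paper, the right-hand side over $Q_r$ contains $\int_{Q_r\cap\{x_n<0\}}\big[|Du|\circ R+|G|\circ R\big]^pw\,dz=\int_{Q_r^+}\big[|Du|+|G|\big]^p(w\circ R)\,dz$, and bounding this by $c\int_{Q_r^+}[|Du|+|G|]^pw\,dz$ does not follow from \eqref{wQQ+} alone, since an $A_p$ weight need not satisfy $w\circ R\lesssim w$ pointwise (consider a power of the parabolic distance to a point in the open upper half cube); so your route is the more robust way to close that step. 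The only bookkeeping point is that your constants $c$ and $\delta$ in \eqref{constk} now come from $[\tilde w]_p\le C(n,p)[w]_p$, which still depends only on $n$, $p$, $[w]_p$, as the statement requires; the cylinder case indeed goes through verbatim since $C_r$ is symmetric under $R$.
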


%

\begin{proof}
This is a consequence of Theorem \ref{Thmsolution} with an extension argument. For a function $v$ on $Q_r^+$ we define $\tilde v$ and  $\overline v$ on $Q_r$ by the odd extension and the even extension of $v$, respectively, i.e.,
\[
\tilde v(z)=v(x_1,\dots,x_n,t) =
\left\{
\begin{array}{ccl}
v(x_1,\dots,x_{n-1},x_n,t) & \text{if} & x_n > 0,\\
-v(x_1,\dots,x_{n-1},-x_n,t) & \text{if} & x_n< 0,
\end{array}
\right.
\]
and
\[
\overline v(z)=v(x_1,\dots,x_n,t) =
\left\{
\begin{array}{ccl}
v(x_1,\dots,x_{n-1},x_n,t) & \text{if} & x_n > 0,\\
v(x_1,\dots,x_{n-1},-x_n,t) & \text{if} & x_n< 0.
\end{array}
\right.
\]
Let $G=(g_1,\dots,g_n)$ and define $G^*:=(\widetilde{g_1},\dots,\widetilde{g_{n-1}},\overline{g_n})$. Then we can see that  $\tilde u$ (the even extension of $u$)  is in $L^1((-r^2,r^2), W^1_1(K_r))$ and satisfies 
\begin{equation}\label{tildeu}
\tilde u_t =\mathrm{div}\, G^*\ \  \text{in } Q_r, \quad \text{in the distribution sense.}
\end{equation}
This can be proved by using a cut-off function, see for instance \cite[Theorem 3.4]{Mart81}. Indeed, for any small $\epsilon\in(0,1)$, let $U_\epsilon=\{(x',x_n,t): 0\le x_n \le \epsilon\}$ and $\eta_\epsilon\in C^\infty_0(\R)$ such that $0\le \eta_\epsilon\le 1$, $\eta_\epsilon(\tau)=1$ if $|\tau|\le \frac{\epsilon}{2}$, $\eta_\epsilon(\tau)=0$ if $|\tau|\ge \epsilon$,  $\eta_\epsilon(-\tau)=\eta_\epsilon(\tau)$ for all $\tau\in\R$, and  $|\eta_\epsilon'|\le 4/\epsilon$.
Then we have that for any $\phi\in C^\infty_0(Q_r)$,
$$\begin{aligned}
\int_{Q_r} \tilde u \phi_t\,dz &= \int_{Q_r^+} \tilde u (\phi(x',x_n,t)-\phi(x',-x_n,t))_t\,dz\\
&= \int_{Q_r^+}  u [(1-\eta_\epsilon(x_
n))(\phi(x',x_n,t)-\phi(x',-x_n,t)]_t\,dz \\
&\qquad +\underbrace{\int_{Q_r^+\cap U_\epsilon}  u [\eta_\epsilon(x_n)(\phi(x',x_n,t)-\phi(x',-x_n,t)]_t\,dz}_{=:I_1}\\
&= \int_{Q_r^+} G \cdot D [(1-\eta_\epsilon(x_
n))(\phi(x',x_n,t)-\phi(x',-x_n,t))]\,dz + I_1\\
&= \int_{Q_r^+} G \cdot D (\phi(x',x_n,t)-\phi(x',-x_n,t))\,dz \\
&\qquad + \underbrace{\int_{Q_r^+\cap U_\epsilon} G \cdot D [\eta_\epsilon(x_
n)(\phi(x',x_n,t)-\phi(x',-x_n,t))]\,dz}_{=:I_2} + I_1\\
 & = \int_{Q_r} G^* \cdot D \phi\,dz+ I_1+I_2. 
\end{aligned}$$
This implies \eqref{tildeu} since 
$$
|I_1| \le  c \|\phi_t\|_{\infty} \int_{Q^+_r\cap U_\epsilon}|u|\,dz \ \ \longrightarrow \ \ 0 \quad \text{as }\ \epsilon \ \to \ 0 ,
$$
$$
|I_2| \le  c \|D\phi\|_{\infty} \int_{Q^+_r\cap U_\epsilon}|G|\,dz \ \ \longrightarrow \ \ 0 \quad \text{as }\ \epsilon \ \to \ 0 .
$$

Finally, from Theorem \ref{Thmsolution} with the facts that $(\tilde u)_{Q_r}=0$ and $1\le \frac{w(Q_r)}{w(Q^+_r)}\le [w]_p 2^p$ by \eqref{wQQ+}, we have that 
\[\begin{aligned}
\left( \frac{1}{w(Q_r^+)}\int_{Q^+_r}|u|^{pk}w\,dz\right)^{\frac{1}{pk}} & \le \left( \frac{1}{w(Q_r)}\int_{Q_r}|\tilde u|^{pk}w\,dz\right)^{\frac{1}{pk}} \\
& \le c\, r\left(\frac{1}{w(Q_r)}\int_{Q_r}\big[|D\tilde u|+|G^*|\big]^pw\, dz\right)^{\frac{1}{p}} \\
&\le  c\, r\left(\frac{1}{w(Q_r^+)}\int_{Q_r^+}\big[|Du|+|G|\big]^pw\, dz\right)^{\frac{1}{p}}. \qedhere
\end{aligned}\]
\end{proof}

\begin{remark} In stead of the zero boundary condition on the flat, we can consider a zero initial condition. More precisely, under the same setting as in the above theorem, if $u\in L^{1}((-r^2,r^2),W^1_1(K_r))$ with  $Du\in L^p_w(Q_r,\R^n)$ is a distributional solution of $u_t =\mathrm{div}\, G$ in $Q_r$, where $G\in L^p_w(Q_r,\R^n)$ and satisfies that 
\[
\lim_{t \searrow -r^s}\fint_{-r^s}^t \left( \int_{K_r}|u(x,s)|\, dx \right) \, ds = 0,
\]
then we have the estimate \eqref{sobopoinbd}, replacing $Q_r^+$ with $Q_r$. Its proof is almost the same as the one of the above theorem.   
\end{remark}

We end this section with an application of the above results to typical linear parabolic equations in divergence form. 
\begin{remark} \label{rmk-application}
We consider the following linear parabolic equation 
\begin{equation}\label{linearpara}
\left\{
\begin{array}{rclcl}
u_t -\mathrm{div} (\mathbf{A}(x,t)Du) & =& \mathrm{div} F & \text{in} & C_r,\\
u & =& 0&\text{on} & \partial_{\text{p}}(C_r),
\end{array}
\right.
\end{equation}
where $\partial_{\text{p}}(C_r) =( \partial B_r \times (-r^2, r^2)) \cup (B_r \times \{t = -r^2 \} ) $
 as the parabolic boundary of $C_r$. Then in view of \cite{Ngu17}, one can see that, under suitable assumptions on $\mathbf{A}$, for instance that $\mathbf{A}$ 
is bounded,
satisfies the uniform ellipticity and is of VMO(vanishing mean oscillation), if $F\in L^p_w(C_r,\R^n)$ for some $1<p<\infty$ and $w\in A_p$,  there exists $u\in L^q((-r^2,r^2); W^1_q(B_r))$ for some $q>1$ such that $u$ is a distributional solution of  \eqref{linearpara} and 
\[
\int_{C_r} |Du|^{p}w\, dz \le c \int_{C_r} |F|^{p}w\, dz.
\]
Therefore, by Theorem~\ref{Thmsolutionhalf}, we obtain that $u\in L^{pk}_w(C_r)$ and 
\[
\left( \frac{1}{w(C_r)} \int_{C_r}|u-(u)_{C_r}|^{pk}w\,dz\right)^{\frac{1}{pk}} \le c\,r\left(\frac{1}{w(C_r)}\int_{C_r}|F|^pw\, dz\right)^{\frac{1}{p}}.
\]
\end{remark}

\section{\bf Weighted Sobolev-Poincar\'e type  inequalities for the spatial gradient}

We consider functions in $W^{2,1}_1$-space and obtain weighted Sobolev-Poincar\'e type inequalities for $Du$ with a weight $w\in A_p$ and $1<p<\infty$ as consequences of the results in the previous section.

\begin{theorem}\label{Thmint} Let $1<p<\infty$ and $w\in A_p$. If $u\in W^{2,1}_{1}(Q_r)$ and $u_t, |D^2u| \in L^p_w(Q_r)$, then $Du \in L^{pk}_w(Q_r,\R^n)$ with  $k>1$ given in \eqref{constk}, and we have
$$
\left(\frac{1}{w(Q_r)}\int_{Q_r} |Du-(Du)_{Q_r}|^{pk}w\,dz \right)^{\frac{1}{pk}}  \le c\,r \left(\frac{1}{w(Q_r)}\int_{Q_r}\left[|D^2u|^p+|u_t|^p\right]w\,dz\right)^{\frac{1}{p}}
$$
for some $c=c(n,p,[w]_p)>0$.
\end{theorem}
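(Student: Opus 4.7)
The strategy is to reduce the statement to Theorem~\ref{Thmsolution} applied separately to each spatial partial derivative $v_i := u_{x_i}$, $i=1,\dots,n$. Since $u \in W^{2,1}_1(Q_r)$ with $u_t$ and $|D^2u|$ in $L^p_w(Q_r)$, each $v_i$ lies in $L^1((-r^2,r^2), W^1_1(K_r))$ with spatial gradient $Dv_i$ satisfying $|Dv_i| \le |D^2u|$, so $Dv_i \in L^p_w(Q_r,\R^n)$, and thus $v_i$ meets the hypotheses of Theorem~\ref{Thmsolution} provided we identify a suitable $G_i$.

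The key observation is that $v_i$ is a distributional solution of the divergence form equation $(v_i)_t = \mathrm{div}\,(u_t \mathbf{e}_i)$ in $Q_r$, where $\mathbf{e}_i$ is the $i$-th standard basis vector of $\R^n$. Indeed, for any $\phi \in C^\infty_0(Q_r)$, an integration by parts in $x_i$ slicewise in $t$ (justified by $u(\cdot,t)\in W^1_1(K_r)$ for a.e.\ $t$ together with Fubini) gives
\[
\int_{Q_r} v_i\, \phi_t\,dz = -\int_{Q_r} u\, (\phi_{x_i})_t\,dz,
\]
and the defining property of the weak time derivative $u_t \in L^p_w(Q_r) \subset L^1(Q_r)$ applied to the test function $\phi_{x_i}$ yields
\[
-\int_{Q_r} u\,(\phi_{x_i})_t\,dz = \int_{Q_r} u_t\, \phi_{x_i}\,dz = \int_{Q_r}(u_t \mathbf{e}_i)\cdot D\phi\,dz.
\]
Hence Theorem~\ref{Thmsolution} applies to $v_i$ with $G_i := u_t \mathbf{e}_i \in L^p_w(Q_r,\R^n)$, since $u_t \in L^p_w(Q_r)$.

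Applying Theorem~\ref{Thmsolution} to each $v_i$ produces, for $k$ as in \eqref{constk},
\[
\left(\frac{1}{w(Q_r)}\int_{Q_r}|v_i-(v_i)_{Q_r}|^{pk}w\,dz\right)^{\frac{1}{pk}} \le c\,r\left(\frac{1}{w(Q_r)}\int_{Q_r}[|Dv_i|+|u_t|]^pw\,dz\right)^{\frac{1}{p}}.
\]
Since $|Dv_i|\le |D^2u|$ and $|Du-(Du)_{Q_r}|^{pk} \le c(n,p,k)\sum_{i=1}^n |v_i-(v_i)_{Q_r}|^{pk}$, summing over $i=1,\dots,n$ and absorbing constants yields the claimed inequality. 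I do not anticipate any serious obstacle: the whole statement is essentially a direct corollary of Theorem~\ref{Thmsolution}, the only nontrivial ingredient being the distributional identity for $v_i$, whose verification is routine.
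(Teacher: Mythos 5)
Your proposal is correct and follows essentially the same route as the paper: the paper also sets $v_i:=u_{x_i}$, observes $(v_i)_t=\mathrm{div}(u_t\mathbf{e}_i)$, applies Theorem~\ref{Thmsolution} to each $v_i$ with $G^i=u_t\mathbf{e}_i$, and sums over $i$. Your explicit verification of the distributional identity via test functions is a harmless elaboration of a step the paper simply asserts.
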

\begin{proof}
For $i=1,2,\dots,n$, set $v_i:=u_{x_i}$. Then we have $(v_i)_{t}= (u_t)_{x_i}= \mathrm{div}\, G^i$ a.e. in $Q_r$, where $G^i=(g^i_1,\dots,g^i_n)$ with $g^i_i=u_t$ and $g^i_j=0$ for all $j\neq i$. Therefore, by Theorem~\ref{Thmsolution}, we have for each $i=1,\dots,n$,
$$\begin{aligned}
\left(\frac{1}{w(Q_r)}\int_{Q_r} |u_{x_i}-(u_{x_i})_{Q_r}|^{pk}w\,dz \right)^{\frac{1}{pk}}  &\le c\,r \left(\frac{1}{w(Q_r)}\int_{Q_r}\left[|D(u_{x_i})|^p+|G^i|^p\right]w\,dz\right)^{\frac{1}{p}}\\
& \le c\,r \left(\frac{1}{w(Q_r)}\int_{Q_r}\left[|D^2u|^p+|u_t|^p\right]w\,dz\right)^{\frac{1}{p}}.
\end{aligned}$$
This completes the proof.
\end{proof}

\begin{theorem}\label{Thmbdy}Let $1<p<\infty$ and $w\in A_p$. If $u\in W^{2,1}_{1}(Q_r^+)$ with $u_t, |D^2u| \in L^p_w(Q_r^+)$ and $u =0 $ on $T_r$, then $Du \in L^{pk}_w(Q_r^+,\R^n)$ with  $k>1$ given in \eqref{constk}, and we have
$$\begin{aligned}
&\bigg(\frac{1}{w(Q_r^+)}  \int_{Q^+_r}\bigg[ \sum_{i=1}^{n-1} |u_{x_i}|+|u_{x_n}-(u_{x_n})_{Q^+_r}| \bigg]^{pk} w\,dz \bigg)^{\frac{1}{pk}}\\
& \qquad\qquad \qquad\qquad\qquad\le c \, r \left(\frac{1}{w(Q_r
^+)}\int_{Q_r^+}\left[|u_t|^p+|D^2u|^p\right]w\,dz\right)^{\frac{1}{p}}
\end{aligned}
$$
for some $c=c(n,p,[w]_p)>0$. 
\end{theorem}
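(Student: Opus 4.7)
The plan is to reduce the boundary Sobolev--Poincar\'e inequality on $Q_r^+$ to the interior one (Theorem~\ref{Thmint}) on the full cube $Q_r$ via odd reflection across the flat boundary $T_r$. Exploiting $u=0$ on $T_r$, I define
\[\tilde u(x',x_n,t):=\begin{cases} u(x',x_n,t), & x_n\ge 0,\\ -u(x',-x_n,t), & x_n<0. \end{cases}\]
A cut-off argument analogous to the one in the proof of Theorem~\ref{Thmsolutionhalf}, now carried out at the level of the second-order weak derivatives, shows that $\tilde u\in W^{2,1}_{1}(Q_r)$. A direct check of parities yields that $\tilde u_{x_i}$ is odd in $x_n$ for $i<n$, $\tilde u_{x_n}$ is even in $x_n$, and $|D^2\tilde u|(x',x_n,t)=|D^2 u|(x',|x_n|,t)$, $|\tilde u_t|(x',x_n,t)=|u_t|(x',|x_n|,t)$.

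Next I compute the cube mean. Oddness forces $(\tilde u_{x_i})_{Q_r}=0$ for $i<n$, while for $i=n$ a change of variables gives $(\tilde u_{x_n})_{Q_r}=(u_{x_n})_{Q_r^+}$. Hence on $Q_r^+$,
\[|D\tilde u-(D\tilde u)_{Q_r}|^2=\sum_{i=1}^{n-1}u_{x_i}^2+\bigl(u_{x_n}-(u_{x_n})_{Q_r^+}\bigr)^2,\]
and this quantity (as well as $|D^2\tilde u|$ and $|\tilde u_t|$) is reflection-symmetric in $x_n$.

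To match the weighted averages cleanly, I replace $w$ with its reflected version $\hat w(x',x_n,t):=w(x',|x_n|,t)$; a standard reflection argument gives $\hat w\in A_p$ with $[\hat w]_p\le C([w]_p)$, and $\hat w=w$ on $Q_r^+$. Applying Theorem~\ref{Thmint} to $\tilde u$ on $Q_r$ with weight $\hat w$, the reflection symmetry yields $\int_{Q_r}F\,\hat w\,dz=2\int_{Q_r^+}F\,w\,dz$ for the integrands in question and $\hat w(Q_r)=2w(Q_r^+)$, so the factors of $2$ cancel and the inequality restricts to
\[\left(\frac{1}{w(Q_r^+)}\int_{Q_r^+}\left[\sum_{i<n}u_{x_i}^2+\bigl(u_{x_n}-(u_{x_n})_{Q_r^+}\bigr)^2\right]^{pk/2}w\,dz\right)^{\frac{1}{pk}}\le c\,r\left(\frac{1}{w(Q_r^+)}\int_{Q_r^+}[|D^2u|^p+|u_t|^p]\,w\,dz\right)^{\frac{1}{p}}.\]
The dimensional equivalence $\bigl(\sum_{i<n}a_i^2+b^2\bigr)^{1/2}\asymp\sum_{i<n}|a_i|+|b|$ converts the left-hand side to the form claimed in the theorem.

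The main obstacle is verifying that $\tilde u\in W^{2,1}_1(Q_r)$ has no singular boundary contributions on $T_r$ in the $x_n$-derivatives. The hypothesis $u|_{T_r}=0$ is essential here: it forces $u_{x_i}|_{T_r}=0$ for $i<n$, so the odd extensions of the tangential derivatives are continuous across $T_r$, while the even extension $\tilde u_{x_n}$ is automatically continuous. This verification mirrors the cut-off limit in the proof of Theorem~\ref{Thmsolutionhalf}. The other ingredient, $[\hat w]_p\le C([w]_p)$, is a standard consequence of the reflection invariance of the parabolic $A_p$ condition together with its doubling property.
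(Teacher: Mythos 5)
Your argument is correct, but it takes a genuinely different route from the paper's. The paper works componentwise at the level of the first derivatives $v_i=u_{x_i}$: for $i<n$ it uses $v_i=0$ on $T_r$ and runs the odd-extension argument of Theorem~\ref{Thmsolutionhalf} for the divergence-form equation $(v_i)_t=\mathrm{div}\,G^i$, while for $v_n=u_{x_n}$ it applies Theorem~\ref{Thmsolution} directly on the half cube $Q_r^+$ (invoking, without proof, that the interior theorem still holds on $Q_r^+$, which is an admissible parabolic rectangle); this is why the mean is subtracted only in the $x_n$-component. You instead reflect $u$ itself oddly, verify $\tilde u\in W^{2,1}_1(Q_r)$, and apply the interior Theorem~\ref{Thmint} once on the full cube; the parity bookkeeping ($\tilde u_{x_i}$ odd with zero cube mean for $i<n$, $\tilde u_{x_n}$ even with mean $(u_{x_n})_{Q_r^+}$) produces the asymmetric left-hand side in one stroke, and the elementary equivalence of $\ell^1$ and $\ell^2$ norms finishes. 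Both routes hinge on the same key fact, that $u=0$ on $T_r$ forces the tangential traces $u_{x_i}|_{T_r}=0$ for $i<n$ (in your case this kills the surface term in $\partial_{x_n}\tilde u_{x_i}$; in the paper it legitimizes the odd extension of $v_i$). What your route buys: you avoid the unproved remark that Theorem~\ref{Thmsolution} holds on $Q_r^+$, and your even reflection $\hat w(x',x_n,t)=w(x',|x_n|,t)$ turns the restriction from $Q_r$ back to $Q_r^+$ into an exact identity ($\hat w(Q_r)=2w(Q_r^+)$ and all integrands are $x_n$-even), which is more robust than restricting with the original $w$ as in the displayed chain of Theorem~\ref{Thmsolutionhalf}, where the lower-half term $\int_{Q_r\cap\{x_n<0\}}(\text{reflected data})^p\,w\,dz$ admits no pointwise comparison with the upper-half integral for a general parabolic $A_p$ weight. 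The price is two facts you only assert: (i) $\hat w\in A_p$ with $[\hat w]_p\le c(n,p)[w]_p$, which is true and routine since reflection in $x_n$ maps parabolic cubes to parabolic cubes and any cube meeting $\{x_n=0\}$ can be absorbed into a comparable symmetric cube on which the $A_p$ condition for $w$ applies; and (ii) the $W^{2,1}_1$-extension itself, which needs both the zero-trace statement for $u_{x_i}$, $i<n$ (tangential differentiation of the trace of $u$) and the cut-off limit $\epsilon^{-1}\int_{Q_r^+\cap U_\epsilon}|u_{x_i}|\,dz\to 0$ that this zero trace guarantees. Both are standard and at the level of detail the paper itself permits, so I consider your outline complete; spelling out (i) and (ii) would make it fully self-contained.
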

\begin{proof}
Since $u_{x_i}=0$ on $T_r$ for $i=1,\dots,n-1$, in view of the proofs of Theorem~\ref{Thmint} and  Theorem~\ref{Thmsolutionhalf} we have that for each $i=1,\dots,n-1$,
$$\begin{aligned}
\left(\frac{1}{w(Q^+_r)}\int_{Q_r} |u_{x_i}|^{pk}w\,dz \right)^{\frac{1}{pk}}   \le c\,r \left(\frac{1}{w(Q_r^+)}\int_{Q_r^+}\left[|D^2u|^p+|u_t|^p\right]w\,dz\right)^{\frac{1}{p}}.
\end{aligned}$$

On the other hand, for $u_{x_n}$, we directly apply Theorem~\ref{Thmsolution}. Note that clearly Theorem~\ref{Thmsolution} still holds  if we replace $Q_r$ by $Q_r^+$. Since $(v_i)_{t}= (u_t)_{x_i}= \mathrm{div}\, G$ a.e. in $Q_r^+$, where $G^i=(0,\dots,0,u_t)$, we obtain
$$\begin{aligned}
\left(\frac{1}{w(Q_r^+)}\int_{Q_r^+} |u_{x_n}-(u_{x_n})_{Q_r^+}|^{pk}w\,dz \right)^{\frac{1}{pk}}  &\le c\,r \left(\frac{1}{w(Q_r^+)}\int_{Q_r^+}\left[|D(u_{x_n})|^p+|G|^p\right]w\,dz\right)^{\frac{1}{p}}\\
& \le c\,r \left(\frac{1}{w(Q_r^+)}\int_{Q_r^+}\left[|D^2u|^p+|u_t|^p\right]w\,dz\right)^{\frac{1}{p}}.
\end{aligned}$$
This completes the proof.
\end{proof}

\begin{remark}
Theorems~\ref{Thmsolutionhalf}, \ref{Thmint} and \ref{Thmbdy} still hold  if we replace the parabolic cube $Q_r$ by the parabolic cylinder $C_r$. Moreover, we also have the counterparts of Corollary~\ref{Cor1} for those theorems.
\end{remark}

\section{\bf Acknowledgment}
We thank the referee for helpful comments.
L. Diening was supported by the Deutsche Forschungsgemeinschaft
(DFG, German Research Foundation) – SFB 1283/2 2021 – 317210226. 
M. Lee was supported by the
National Research Foundation of Korea (NRF-2019R1F1A1061295). J. Ok was supported by the National
Research Foundation of Korea (NRF-2017R1C1B2010328)


\bibliographystyle{amsplain}

\end{document}